\newtheorem{theorem}{Theorem}[section]
\newtheorem{conjecture}[theorem]{Conjecture}
\newcommand{\rr}{\mathds{R}}
\DeclareMathOperator{\dist}{dist}
\title{Counterexamples to the Colorful Tverberg Conjecture for Hyperplanes}
\author{Jo\~ao Pedro Carvalho}
\address{Haverford College. Haverford, PA 19041}
\email{jdecarvalh@haverford.edu}
\author{Pablo Sober\'on}
\address{Baruch College, City University of New York.  New York, NY 10010}
\email{pablo.soberon-bravo@baruch.cuny.edu}
\thanks{This project was done as part of the 2021 New York Discrete Math REU, funded by NSF grant DMS 2051026.  Carvalho's research was supported by the KINSC Summer Scholar program by Haverford College.  Sober\'on’s research is supported by NSF grant DMS 2054419 and a PSC-CUNY TRADB52 award.}
\date{}
\begin{document}

\begin{abstract}
    In 2008 Karasev conjectured that for every set of $r$ blue lines, $r$ green lines, and $r$ red lines in the plane, there exists a partition of them into $r$ colorful triples whose induced triangles intersect.  We disprove this conjecture for every $r$ and extend the counterexamples to higher dimensions.
\end{abstract}

\maketitle

\section{Introduction}

Tverberg's theorem is a central result in combinatorial geometry.  It states that \textit{any set of $(r-1)(d+1)+1$ points in $\rr^d$ can be partitioned into $r$ sets whose convex hulls intersect.}  It was proven in 1966 by Helge Tverberg \cite{Tverberg:1966tb} and its generalizations and extensions have sparked significant interest in the area \cites{Blagojevic:2017bl, Barany:2018fy, DeLoera:2019jb}.

One of the most intriguing open questions regarding Tverberg's theorem is the colorful version.  This was conjectured by B\'ar\'any and Larman.

\begin{conjecture}[B\'ar\'any, Larman 1992 \cite{Barany:1992tx}]
	Let $r,d$ be positive integers.  Given $d+1$ sets $X_1, \ldots, X_{d+1}$, each consisting of $r$ points of $\rr^d$, there exists a partition of their union into $r$ sets $A_1, \ldots, A_r$ such that each $A_j$ has exactly one point of each $X_i$ and the convex hulls of $A_1, \ldots, A_r$ intersect.
\end{conjecture}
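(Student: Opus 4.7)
The plan is to follow the configuration space/test map (CS/TM) paradigm that has driven essentially all progress on colorful Tverberg-type questions. First, I would model a colorful partition as a point of a configuration space $X$ on which the symmetric group $\mathfrak{S}_r$ acts by permuting the $r$ blocks. The natural candidate is the subcomplex of the $(d+1)$-fold join of $r$-point sets consisting of those ordered $r$-tuples of \emph{rainbow} blocks that form a partition of $X_1\cup\cdots\cup X_{d+1}$—this is, up to relabeling, a colored analogue of the chessboard complex $\Delta_{r,r}^{*(d+1)}$ whose connectivity is well studied.

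Second, I would set up an $\mathfrak{S}_r$-equivariant test map $f\colon X \to W^{\oplus d}$, where $W$ denotes the standard $(r-1)$-dimensional real representation of $\mathfrak{S}_r$, sending a labeled partition to the $r$-tuple of centered centroids of its rainbow blocks. A colorful Tverberg partition exists precisely when $f$ vanishes; otherwise, normalizing produces an equivariant map $X \to S(W^{\oplus d})$, whose nonexistence one would try to establish by an equivariant cohomological obstruction—either an Euler class computation on the Borel construction or a Fadell–Husseini index argument. For prime $r$ the connectivity of $X$ and the structure of the representation $W^{\oplus d}$ match up so as to kill this map, recovering the known positive results (Bárány–Larman for $d=2$; Blagojević–Matschke–Ziegler for the optimal colored version when $r+1$ is prime).

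The main obstacle—and the reason the conjecture has stood open for three decades—is the case of composite $r$ without a convenient prime-power factor. Restricting from $\mathfrak{S}_r$ to a cyclic $p$-subgroup, the standard trick that makes the Borel-construction calculation tractable, is no longer available, and the connectivity estimates for the relevant chessboard-type complex are not sharp enough to rule out the obstructing sphere map. Pushing through would likely require either a new equivariant invariant sensitive to the finer representation theory of $\mathfrak{S}_r$ at composite $r$, or a fundamentally non-topological route, such as a constraint-method reduction to the plain Tverberg theorem that exploits the specific combinatorics of the color classes $X_1,\ldots,X_{d+1}$. Given these barriers, I would regard any attempt at a full proof as tentative and would expect the realistic outcome of this approach to be, at best, a new partial case rather than the full conjecture.
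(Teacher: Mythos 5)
This statement is an open conjecture, not a theorem of the paper: the paper records it as the B\'ar\'any--Larman conjecture of 1992 and explicitly notes that it has been confirmed only when $r+1$ is prime (Blagojevi\'c--Matschke--Ziegler) or when $d=2$ (B\'ar\'any--Larman). The paper offers no proof of it --- its actual contribution is a \emph{counterexample} to the analogous colorful conjecture for hyperplanes --- so there is nothing in the paper to compare your argument against, and you should not present a proof of this statement as part of reproducing the paper's results.

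Taken on its own terms, your write-up is a research plan rather than a proof, and you say so yourself in the last paragraph; that honesty is appropriate, but it means the submission contains a genuine and acknowledged gap. Concretely: the configuration-space/test-map scheme you describe reduces the conjecture to the nonexistence of an $\mathfrak{S}_r$-equivariant map from a chessboard-type complex to the sphere $S(W^{\oplus d})$, but for composite $r$ that is not a prime power the relevant equivariant obstruction is known to vanish in closely related settings (this is exactly the \"Ozaydin phenomenon that underlies the counterexamples to the topological Tverberg conjecture cited in this paper), so the map you want to rule out may well exist and the approach cannot close the general case without fundamentally new input. The correct disposition is to leave the statement as a conjecture, exactly as the paper does.
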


The conjecture above has been confirmed when $r+1$ is prime \cites{Blagojevic:2011vh, Blagojevic:2015wya}, or when $d=2$ \cite{Barany:1992tx}. If each $X_i$ is allowed to have more than $r$ points, there are plenty of positive results (see, e.g., \cites{Blagojevic.2014, Zivaljevic:1992vo} and the references therein).

One popular way to find variations of Tverberg's theorem is to replace the convex hull operator.  The topological versions of Tverberg's theorem are the most notorious example, in which one wants to show that any continuous mapping from an $(r-1)(d+1)$-dimensional simplex to $\rr^d$ has points covered by the image of $r$ pairwise disjoint faces \cites{Barany:1981vh, Oza87, Volovikov:1996up}.  In this case, one must either introduce conditions on the parameters, such as being prime powers, or use a larger number of points \cites{mabillard2014, Frick:2015wp, Avvakumov2019, Frick:2020uj}.  Yet, there are other ways to replace the convex hull.  A recent example is if we consider for any two points the ball centered at their midpoint that contains both points in its boundary \cites{Huemer:2019ji, Soberon2020}.  Huemer et al. showed that the natural extension of the colorful Tverberg theorem is valid of this setting in the plane.  Another way to replace the convex hull is with Tverberg's theorem for hyperplanes.

In the hyperplane version, we are given sets of hyperplanes in $\rr^d$ in general position instead of points.  We say that a family of hyperplanes in $\rr^d$ is in general position if the normal vectors of any $d$ hyperplanes are linearly independent and no $d+1$ hyperplanes coincide.  With these conditions, any $d+1$ hyperplanes uniquely define a simplex.

\begin{theorem}[Karasev 2008 \cite{Karasev:2008jl}]\label{thm:Karasev-monochromatic}
	Let $r$ be a prime power and $d$ be a positive integer.  Any family of $r(d+1)$ hyperplanes in $\rr^d$ in general position can be split into $r$ sets of $d+1$ hyperplanes each so that their induced simplices all intersect.
\end{theorem}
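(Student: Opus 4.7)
The plan is to apply the \emph{configuration space / test map} paradigm, the standard topological framework for Tverberg-type theorems. First I would set up a configuration space $X$ whose points encode ordered partitions of the $r(d+1)$ hyperplanes into $r$ blocks of size $d+1$, paired with a candidate witness point $x \in \mathbb{R}^d$. A minimal model is $X = P(r,d+1) \times \mathbb{R}^d$, where $P(r,d+1)$ is the finite set of labeled balanced partitions. The symmetric group $S_r$ acts on $X$ by permuting the $r$ blocks (trivially on the $\mathbb{R}^d$ factor), and the conclusion we seek is $S_r$-invariant.

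Next I would construct an $S_r$-equivariant test map $\Phi : X \to V$ into a real $S_r$-representation $V$, designed so that $\Phi(\pi,x) = 0$ precisely when $x$ lies in every simplex of the partition $\pi$. For each block $B_j$, the $d+1$ chosen hyperplanes are in general position and bound a unique simplex $T_{B_j}$, namely the single bounded cell of their arrangement, with $d+1$ affinely independent vertices. Any $x \in \mathbb{R}^d$ has unique barycentric coordinates $\beta_j(\pi,x) \in \mathbb{R}^{d+1}$ with respect to these vertices, and $x \in T_{B_j}$ iff all entries of $\beta_j(\pi,x)$ are nonnegative. Collecting the negative parts of these coordinates across the $r$ blocks and quotienting by the $S_r$-diagonal yields a suitable $\Phi$, valued in a nontrivial $S_r$-representation.

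The remaining task is to show that $\Phi$ must have a zero. When $r = p^k$ is a prime power, I would restrict the $S_r$-action to a transitive elementary abelian $p$-subgroup $G \cong (\mathbb{Z}/p)^k$ and compare the Fadell--Husseini indices of the source and of $V \setminus \{0\}$. The goal is to establish
\[
\mathrm{Ind}_G(X) \not\subseteq \mathrm{Ind}_G(V \setminus \{0\}),
\]
which rules out a $G$-equivariant map $X \to V \setminus \{0\}$ and so forces $\Phi$ to vanish somewhere, producing a partition with the desired property.

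The hardest part will be twofold. First, $\Phi$ must be designed to be genuinely $S_r$-equivariant and to have the correct zero set, even though the combinatorial identity of the bounded cell $T_{B_j}$ (and hence the sign conventions underlying $\beta_j$) depends on orientation choices for the hyperplane normals; these choices must be made coherently across $X$ and are the analogue of the sign considerations that distinguish the colorful setup later disproved in the paper. Second, the equivariant index computation must genuinely exploit the prime-power structure of $r$, since the relevant $G$-equivariant cohomology class is nonzero modulo $p$ precisely because $G$ is an elementary abelian $p$-group, which is the sole reason the prime power hypothesis appears in the statement.
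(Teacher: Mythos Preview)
There is a genuine gap at the obstruction step. First, the non-inclusion you state is in the wrong direction: monotonicity of the Fadell--Husseini index gives $\mathrm{Ind}_G(Y)\subseteq\mathrm{Ind}_G(X)$ whenever a $G$-map $X\to Y$ exists, so to rule out a map $X\to V\setminus\{0\}$ you need $\mathrm{Ind}_G(V\setminus\{0\})\not\subseteq\mathrm{Ind}_G(X)$. More seriously, with your choice of source this correct non-inclusion can never hold. The space $X=P(r,d+1)\times\rr^d$ is $G$-homotopy equivalent to the finite free $G$-set $P(r,d+1)$, so $H^*_G(X)\cong H^*(X/G)$ is concentrated in degree~$0$ and $\mathrm{Ind}_G(X)$ is the full augmentation ideal $H^{>0}(BG)$. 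Any Euler class lives there, hence $\mathrm{Ind}_G(V\setminus\{0\})\subseteq\mathrm{Ind}_G(X)$ automatically. Concretely, a $G$-equivariant map $X\to V\setminus\{0\}$ always exists: pick one nonzero vector of $V$ on a representative of each $G$-orbit of components and extend by the action. The CS/TM machine only has force when the source carries high connectivity (deleted joins, chessboard complexes); a discrete set of partitions contributes none, so the scheme cannot distinguish hyperplane configurations for which the conclusion holds from those for which it fails.

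The paper does not itself prove this theorem --- it is quoted from Karasev --- but it sketches Karasev's argument in its final section, and that argument is structurally different from a direct CS/TM attack. One introduces an auxiliary point $p\in\rr^d$, takes the orthogonal projection of $p$ onto each of the $r(d+1)$ hyperplanes, and adjoins $p$ itself, obtaining $r(d+1)+1$ points to which Tverberg for $r+1$ parts applies. The geometric lemma is that whenever one Tverberg part is the singleton $\{p\}$ and the remaining $r$ parts each have $d+1$ elements, the induced partition of the hyperplanes has $p$ in every simplex. The prime-power hypothesis then enters in showing that for \emph{some} $p$ such a well-shaped partition exists. This reduction replaces your discrete partition space by the continuous parameter $p$ together with the usual topological Tverberg configuration space, which is where the needed connectivity actually comes from.
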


The case $d=2$ was proved earlier by Rousseeuw and Hubert without conditions on $r$ \cite{Rousseeuw1999}.  Karasev also proved further extensions of the theorem above \cite{Karasev:2011jv}.  In 2008 Karasev conjectured that a colorful version of the Tverberg-type theorems for hyperplanes should hold similar to the B\'ar\'any--Larman conjecture.

\begin{conjecture}[Karasev 2008 \cite{Karasev:2008jl}]\label{conj:Karasev2008}
	Given $r$ blue, $r$ red, and $r$ green lines in the plane in general position, it is possible to split them into $r$ colorful triples whose induced triangles intersect.
\end{conjecture}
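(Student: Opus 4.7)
The title of the paper makes clear that Conjecture 1.2 is to be disproved, so my plan is to construct explicit counterexamples rather than attempt a proof.

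The natural first step is the smallest nontrivial instance, $r=2$. There the search space is very small: six lines in general position admit only $2! \cdot 2! = 4$ partitions into two colorful triples. The goal is to find a configuration where every one of these four partitions yields a pair of disjoint triangles. I would try a clustering strategy: place $b_1, r_1, g_1$ so that their triangle sits near a point $P$ and $b_2, r_2, g_2$ so that their triangle sits near a distant point $Q$, which makes the aligned partition fail immediately. The delicate part is choosing slopes and positions so that the three mixed partitions also yield disjoint triangles, because swapping lines between the two clusters tends to produce elongated triangles that span both $P$ and $Q$ and therefore cross. One refinement is to make one cluster extremely compact compared to the other, so that the long, thin mixed triangles are squeezed along a controllable direction and can be kept away from both clusters. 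This is a finite-dimensional constraint satisfaction problem that should yield either to direct geometric analysis or to a short computer search.

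For general $r$, once a base counterexample $L_0$ is in hand, my plan is to combine copies. Place $k$ slightly perturbed copies of $L_0$ in widely separated regions of the plane, with angles chosen so that every triangle using lines from more than one copy is a long sliver missing at least one of the other clusters. Then any colorful partition of the union of the copies must, in at least one copy, reduce to a bad colorful partition of that copy's six lines, which fails by construction. If clustering does not extend cleanly for all $r$, I would fall back on an inductive construction that adds colorful triples one at a time, or on a uniform construction tailored to arbitrary $r$ from the outset.

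The principal obstacle is the base case. With only a handful of lines one must simultaneously destroy every colorful partition, and pushing certain triangles apart tends to drag others together. Identifying the right geometric invariant — perhaps the cyclic order of the slopes, the dual description of lines as points on the projective line, or a combinatorial labelling of the resulting arrangement — looks like the heart of the matter. Should $r=2$ turn out to admit no counterexample, the base case would shift to $r=3$, where there are $(3!)^2 = 36$ partitions to kill but $9$ lines' worth of parameters to play with.
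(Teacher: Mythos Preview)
Your proposal correctly identifies that the task is to disprove the conjecture, but the plan has a genuine gap in the passage from $r=2$ to general $r$, and it misses the key idea that makes the paper's construction work.

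The extension step does not hold as stated. With $k$ separated copies of a six-line base configuration you obtain $r=2k$ lines of each color, and a colorful partition into $2k$ triples need not contain \emph{any} triple whose three lines come from a single copy. For instance, with $k=2$ one can form four triples each of which mixes lines from both copies; no copy is then ``used up'' by a colorful partition of its own six lines, so the reduction to the base case never fires. Your fallback, that mixed triples are long slivers missing some cluster, does not finish the argument either: even if every mixed triangle misses one cluster, nothing prevents all of them from sharing a point somewhere else in the plane. Lines are unbounded, so spatial separation of clusters gives much weaker control over the resulting triangles than it would for point configurations.

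The paper's construction avoids this difficulty by a different kind of degeneracy. Rather than spreading copies apart, it piles lines on top of one another: take $r-1$ copies of $x=1$ and one copy of $x=-1$ for red, $r-1$ copies of $y=-1$ and one copy of $y=1$ for green, and $r-1$ copies of $x=-y$ and one copy of $x=y$ for blue. Because most lines in each color class coincide, the set of \emph{distinct} colorful triangles is bounded independently of $r$, and a short case analysis on which vertex of $[-1,1]^2$ the red/green pair selects shows that every colorful partition contains two disjoint triangles. A final perturbation moves this from weak general position to genuine general position. This ``many copies of the same line'' trick is the idea your proposal is missing; it is what collapses the exponential search over partitions into a constant-size case check for every $r$ simultaneously.
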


If $r$ is a prime power, Karasev proved a relaxed version of the conjecture above if each family has $2r-1$ lines but we only look for $r$ colorful triples whose triangles intersect.  In this note, we disprove Conjecture \ref{conj:Karasev2008} for every $r$.  This was previously known to be false when $r=2m$ for odd $m$ \cite{lee2018}.  Our examples work in higher dimensions, so we also disprove the natural extension to $\rr^d$.  In other words, Theorem \ref{thm:Karasev-monochromatic} cannot be made colorful as in the B\'ar\'any--Larman conjecture.

\begin{theorem}\label{thm:main}
	Let $r, d \ge 2$ be positive integers.  There exists a family of $d+1$ sets $X_1, \ldots, X_{d+1}$, each consisting of $r$ hyperplanes in $\rr^d$, so that their union is in general position and the following statement holds.  For any partition of their union into $r$ sets $A_1,\ldots, A_r$ so that each $A_j$ has a hyperplane of each $X_i$, there are two set $A_j$, $A_{j'}$ whose induced simplices do not intersect.
\end{theorem}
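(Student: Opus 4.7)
We propose a single uniform construction that works for every pair $r, d \ge 2$. The design principle is to take each color class $X_i$ to consist of $r$ hyperplanes whose normal vectors are nearly parallel to a fixed direction $n_i$, with the $d+1$ chosen directions being the outward unit normals of a regular $d$-simplex. In this regime the induced simplices admit a very tractable description, and a partition-invariant sum will force every colorful matching to contain a disjoint pair; a generic small perturbation then produces general position while preserving all the strict inequalities used.

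In the idealized limit where every hyperplane in $X_i$ has normal exactly $n_i$ and $n_1 + \cdots + n_{d+1} = 0$, the simplex cut out by a colorful tuple $\sigma = (\sigma_1, \ldots, \sigma_{d+1})$ is
\[
\Delta(\sigma) = \{x \in \rr^d : n_i \cdot x \le t_{i, \sigma_i} \text{ for all } i\},
\]
where $t_{i,1} < t_{i,2} < \cdots < t_{i,r}$ are the offsets of the $r$ hyperplanes in $X_i$. Because $\sum_i n_i = 0$ and any $d$ of the $n_i$ are linearly independent, Farkas' lemma yields the concise criterion $\Delta(\sigma) \ne \emptyset \iff \sum_i t_{i, \sigma_i} \ge 0$, and therefore $\Delta(\sigma) \cap \Delta(\sigma') \ne \emptyset \iff \sum_i \min(t_{i, \sigma_i}, t_{i, \sigma'_i}) \ge 0$.

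The central observation is that summing this pairwise intersection quantity over all unordered pairs of a colorful partition $\sigma^{(1)}, \ldots, \sigma^{(r)}$ yields a partition-invariant:
\[
\sum_{1 \le a < b \le r} \sum_i \min(t_{i, \sigma^{(a)}_i}, t_{i, \sigma^{(b)}_i}) \;=\; \sum_i \sum_{1 \le x < y \le r} \min(t_{i,x}, t_{i,y}) \;=\; \sum_i \sum_{x=1}^{r-1}(r-x)\, t_{i,x} \;=:\; S,
\]
since for each fixed $i$ the multiset $\{\sigma^{(a)}_i\}_{a=1}^{r}$ equals $\{1, \ldots, r\}$. Choosing the offsets so that $S$ is strictly negative (for instance $t_{i,1} = -1$ and $t_{i,j} = (j-1)\varepsilon$ for $j \ge 2$ and sufficiently small $\varepsilon > 0$) guarantees that every colorful partition contains at least one pair $\{a,b\}$ with $\sum_i \min(t_{i, \sigma^{(a)}_i}, t_{i, \sigma^{(b)}_i}) < 0$; for such a pair $\Delta(\sigma^{(a)}) \cap \Delta(\sigma^{(b)}) = \emptyset$.

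The final step is to upgrade this to a configuration in general position: replace each $n_i$ by $r$ nearby but distinct unit vectors $n_{i,1}, \ldots, n_{i,r}$, keeping the offsets $t_{i,j}$ as above. A generic such choice makes any $d$ of the $r(d+1)$ normals linearly independent, and the intersection criterion for each pair of simplices depends continuously on the perturbation, so any strict inequality is preserved; the disjoint pair flagged above remains disjoint. The main technical obstacle is this perturbation step: in the perturbed model, feasibility of $\Delta(\sigma) \cap \Delta(\sigma')$ is no longer a single scalar inequality but a conjunction of up to $2^{d+1}$ inequalities coming from the extreme rays of the dual cone, and the invariance of $S$ no longer holds on the nose. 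We sidestep this difficulty by noting that we need only preserve \emph{one} disjoint pair per partition — disjointness being an open condition on the normals and offsets — so continuity in the perturbation is enough, and the pairwise invariance is invoked solely in the idealized model.
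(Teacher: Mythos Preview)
There is a genuine gap, and it comes from the very first displayed formula: the set $\{x:n_i\cdot x\le t_{i,\sigma_i}\}$ is \emph{not} in general the induced simplex of the colorful tuple $\sigma$.  With $n_1+\cdots+n_{d+1}=0$ and any $d$ of the $n_i$ independent, the half-space intersection $\{x:n_i\cdot x\le t_i\}$ equals the bounded simplex only when $\sum_i t_i\ge 0$; when $\sum_i t_i<0$ that set is empty and the actual induced simplex is $\{x:n_i\cdot x\ge t_i\}$.  Your pairwise-intersection criterion $\sum_i\min(t_{i,\sigma_i},t_{i,\sigma'_i})\ge 0$ therefore tests the wrong object whenever one of the two tuples has negative offset-sum.

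This orientation flip is unavoidable under your invariant strategy.  Writing $u_x=\sum_i t_{i,x}$ (increasing in $x$), your invariant is $S=\sum_{x=1}^{r-1}(r-x)\,u_x$, so $S<0$ forces $u_1<0$; hence the tuple $(1,\dots,1)$ always has the wrong orientation.  Concretely, with your offsets $t_{i,1}=-1$, $t_{i,j}=(j-1)\varepsilon$, take the diagonal colorful partition $\sigma^{(j)}=(j,\dots,j)$ for $j=1,\dots,r$.  The induced simplex of part $1$ is $\{x:n_i\cdot x\ge -1\text{ for all }i\}$, while for $j\ge 2$ it is $\{x:n_i\cdot x\le (j-1)\varepsilon\text{ for all }i\}$; the origin lies in every one of them, so all $\binom{r}{2}$ pairs intersect and the construction fails outright (already for $r=2$ there is only one pair, and it intersects).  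The perturbation step does not rescue this: an empty half-space intersection in the limit becomes a nonempty simplex after perturbation, and ``disjointness is open'' does not apply when one of the two sets was empty to begin with.

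By contrast, the paper's construction sidesteps any orientation ambiguity by working with explicit lines (and their lifts) and a short case analysis: each color class has $r-1$ copies of one hyperplane and a single ``exceptional'' hyperplane, so a colorful partition is determined by where the exceptional hyperplanes land, and one checks directly that two of the resulting (possibly degenerate) simplices are a positive distance apart.
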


\section{Counterexample}

We first construct the counterexample in dimension two, and extend our constructions in higher dimensional spaces.  Given $d+1$ sets of hyperplanes in $\rr^d$, we say that the are in \textit{weak general position} if every time we pick $d$ hyperplanes of different sets, their normal vectors are linearly independent.  This guarantees that their intersection is a single point.  

Our constructions are made for sets in weak general position, and a small perturbation argument gives Theorem \ref{thm:main}.  We consider each set $X_i$ as a color class.  Given $d+1$ sets $X_1, \ldots, X_{d+1}$ of $r$ hyperplanes each in $\rr^d$, we say that a colorful partition is a partition of their union into $r$ sets $A_1, \ldots, A_r$, each with one element of each $X_i$.  For a colorful $(d+1)$-tuple $A$ in weak general position we denote by $\triangle(A)$ the simplex generated by $A$.  If the hyperplanes of $A$ are concurrent, then $\triangle(A)$ is their point of intersection.

\begin{theorem}\label{thm:counterplane}
Let $r \ge 2$ be an integer.  Consider the set $X_1$ (red) of $r-1$ copies of the line $x=1$ and one copy of the line $x=-1$, the set $X_2$ (green) of $r-1$ copies of the line $y=-1$ and one copy of $y=1$, and the set $X_3$ (blue) of $r-1$ copies of the line $x=-y$ and one copy of the line $x=y$.  Then, every colorful partition $A_1, \ldots, A_r$ of $X_1, X_2, X_3$ has two sets $A_j, A_j'$ so that $\triangle(A_j) \cap \triangle (A_{j'}) = \emptyset$.
\end{theorem}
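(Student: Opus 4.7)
My plan is a case analysis on how the three ``minority'' lines—$x=-1\in X_1$, $y=1\in X_2$, $x=y\in X_3$—are distributed among the triples $A_1,\dots,A_r$. Write $\ell_1,\ell_2,\ell_3$ for the majority lines $x=1$, $y=-1$, $x=-y$; these are concurrent at $P:=(1,-1)$, so any triple using only majority hyperplanes generates the single point $P$. The three minority lines bound a proper triangle $T^{*}=\conv\{(-1,-1),(-1,1),(1,1)\}$, which lies in the closed half-plane $y\ge x$ and therefore does not contain $P$.

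Because each color class has exactly one minority, exactly one triple of the partition contains each minority; those three ``minority-containing'' triples may coincide in $1$, $2$, or $3$ distinct members. In the \emph{one-triple case}, the sole minority-containing triple equals $T^{*}$ and every other triple is the point $\{P\}$; since $P\notin T^{*}$ this is the required disjoint pair. In the \emph{two-triple case}, a short table over the three sub-configurations shows the two-minorities triple is concurrent at a specific vertex $V$ of $T^{*}$, while the one-minority triple generates one of three proper triangles $C_1,C_2,C_3$ (each having $P$ among its vertices); checking $V$ against the defining linear inequality that is violated at $V$ shows $V\notin C_i$, producing the disjoint pair $(\{V\},C_i)$.

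The main obstacle is the \emph{three-triple case}, where the three minorities lie in three distinct triples with simplices $C_1,C_2,C_3$ and any further triples equal $\{P\}$. The difficulty is that each $C_i$ has $P$ as a vertex, so vertex-counting alone does not finish the argument. My plan is to identify a pair $C_i,C_j$ whose only common points lie on a shared edge contained in one of the three majority lines, and then rule out this edge-only contact by exploiting the orientations of the minority lines appearing in $C_i$ and $C_j$: since each $C_k$ is cut off by one minority line, a careful choice of the pair $(i,j)$ places $C_i$ and $C_j$ in opposite open half-planes of a minority line not used by either, collapsing their intersection to $\{P\}$ and then, by a final vertex-level check, to the empty set. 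Tracking which vertex of each $C_i$ coincides with $P$ versus the other two is the step most likely to need delicate bookkeeping. Once the plane case is settled, the same construction extends verbatim to $\rr^{d}$ by adding $d-2$ further color classes in general position, yielding Theorem~\ref{thm:main}.
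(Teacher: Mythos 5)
Your handling of the one-triple and two-triple cases is correct for the configuration as printed, but the three-triple case is not merely ``the main obstacle'': for the configuration exactly as stated it is \emph{false}, so no choice of separating minority line will close it. Take $r=3$ and the colorful partition $A_1=\{x=-1,\;y=-1,\;x=-y\}$, $A_2=\{x=1,\;y=1,\;x=-y\}$, $A_3=\{x=1,\;y=-1,\;x=y\}$ (for larger $r$ let every further triple be $\{x=1,\;y=-1,\;x=-y\}$, which is concurrent at $(1,-1)$). This is a legitimate colorful partition, and the three triangles have vertex sets $\{(-1,-1),(-1,1),(1,-1)\}$, $\{(1,1),(1,-1),(-1,1)\}$, and $\{(1,-1),(1,1),(-1,-1)\}$; every one of them, and every extra degenerate triple, contains the point $(1,-1)$, so all simplices pairwise intersect. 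That point is exactly your $P$, the common point of the three majority lines, and since each $C_i$ has $P$ as a vertex, your plan of pushing some pair $C_i,C_j$ into opposite open half-planes of an unused minority line cannot succeed.

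The failure traces back to a typo in the printed statement rather than to your reasoning about it: comparing with the paper's own argument (which treats $y=1$ as the green majority) and with the $d\ge 3$ construction in Theorem~\ref{thm:weak-general-pos}, the class $X_2$ should consist of $r-1$ copies of $y=1$ and one copy of $y=-1$. With that correction it is the three \emph{minority} lines $x=-1$, $y=-1$, $x=y$ that are concurrent (at $(-1,-1)$), while the majorities bound a proper triangle, and your case analysis then closes easily: in the three-triple case each $C_i$ is itself concurrent, at $(-1,1)$, $(1,-1)$, and $(1,1)$ respectively, giving three pairwise distinct (hence disjoint) point-simplices. The paper organizes the same case analysis by which vertices of $[-1,1]^2$ are selected by the red--green pairs, which is equivalent bookkeeping to your minority-distribution cases. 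Finally, the extension to $\rr^d$ is not verbatim: the paper tilts the last class by an $\varepsilon$ term, projects onto the $x_1x_2$-plane, and then applies a perturbation to reach general position, and some quantitative control (a uniform lower bound on the separation) is needed for that last step.
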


The construction can be seen in Figure \ref{fig:counterexample-plane}.

\begin{proof}
 To make a colorful partition of the three colors we first make a colorful partition of red and green and then assign blue lines to each part.  Choosing a pair of red and green lines is equivalent to choosing a vertex of the square $[-1,1]^2$.  The number of times a vertex can be chosen is bounded by the number of green lines containing it and by the number of red lines containing it.

If the point $(-1,-1)$ is chosen, then all the other $r-1$ vertices chosen have to be $(1,1).$ If the line associated with the point $(-1,-1)$ is $x=y,$ then all the other points are associated with the line $x=-y.$ So the first simplex is just the point $(-1,-1)$ that is not contained in the simplex defined by $(x=1,y=1,x=-y).$  If the blue line associated with $(-1,-1)$ is $x=-y$, then one of the copies of $(1,1)$ is associated with $x=y$.  That last simplex is simply the point $(1,1)$, which is not contained in the triangle spanned by $(x=-1,y=-1, x=-y)$.

If, on the other hand, the point $(-1,1)$ is chosen, then the point $(-1,-1)$ cannot be (they share the same red line). Similarly, the point $(1,1)$ can be chosen at most $r-2$ times (they share the same green line). Therefore the points in the partition have to be $(-1,1),$ $(1,-1),$ and $r-2$ copies of $(1,1).$ One of $(-1,1)$ or $(1,-1)$ will have to be associated with the line $x=-y,$ since there is only one copy of the line $x=y.$ This means that one of the simplices will be a point, say $(-1,1).$ But then one of the other simplices is either the point $(1,-1)$ or defined by this point and the line $x=y,$ and in either case these two simplices are disjoint. The case analysis is analogous if $(1,-1)$ is chosen.
\end{proof}

\begin{figure}
\centerline{\includegraphics{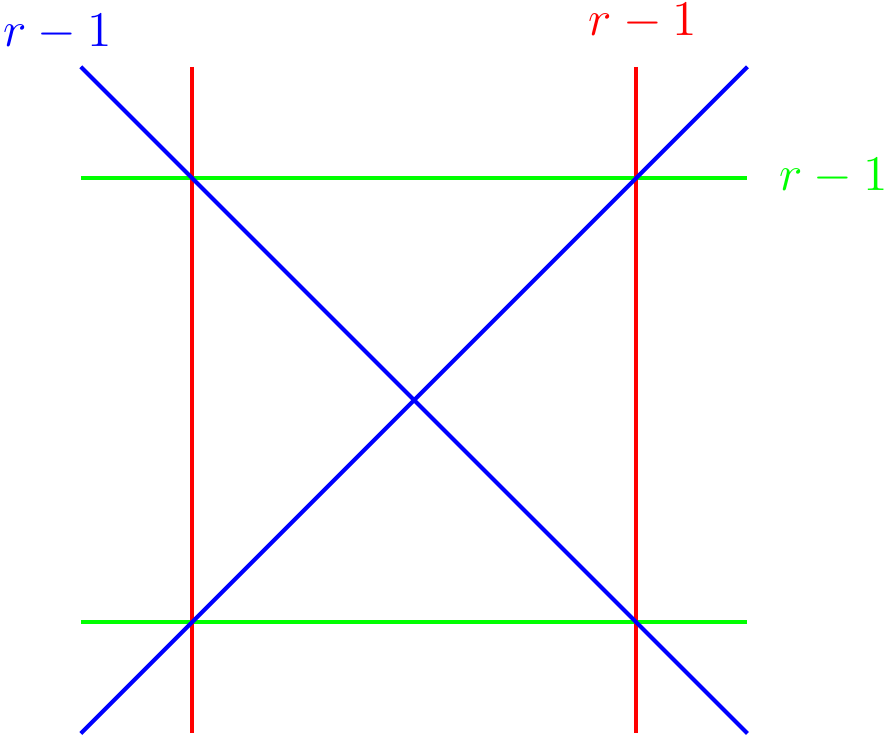} }
\caption{An illustration of the example for $d=2$.}
\label{fig:counterexample-plane}
\end{figure}

Now we extend this construction to $\rr^d$.  For point $x \in \rr^d$ we denote its coordinates by $x=(x_1, \ldots, x_d)$.

\begin{theorem}\label{thm:weak-general-pos}
	Let $d\ge 3 $, $r\ge 2$ be integers, and $\varepsilon >0$ be a real number.  Consider the sets $X_1, \ldots, X_{d+1}$  of hyperplanes in $\rr^d$ defined as follows.  For $i=1,\ldots, d$, the set $X_i$ is made by $r-1$ copies of the hyperplane $x_i = 1$ and one copy of the hyperplane $x_i = -1$.  The set $X_{d+1}$ is made by $r-1$ copies of the hyperplane $x_1+x_2 + \varepsilon(x_3 + \ldots + x_d)=0$ and one copy of the hyperplane $x_1 - x_2 + \varepsilon(x_3 + \ldots + x_d) = 0$.  Then, every colorful partition $A_1, \ldots, A_r$ of $X_1, \ldots, X_{d+1}$ has two sets $A_j, A_{j'}$ so that $\triangle(A_j) \cap \triangle (A_{j'}) = \emptyset$.
\end{theorem}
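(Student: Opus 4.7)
The plan is to reduce to the two-dimensional case established in Theorem \ref{thm:counterplane} by projecting onto the $(x_1,x_2)$-plane. Let $\pi\colon \rr^d \to \rr^2$ denote the linear map $\pi(x_1,\ldots,x_d) = (x_1,x_2)$. Since $\pi$ is linear, it sends each $\triangle(A_j)$ to a (possibly degenerate) triangle, and any intersection $\triangle(A_j) \cap \triangle(A_{j'}) \neq \emptyset$ in $\rr^d$ forces $\pi(\triangle(A_j)) \cap \pi(\triangle(A_{j'})) \neq \emptyset$ in $\rr^2$. Hence it suffices to exhibit indices $j,j'$ with $\pi(\triangle(A_j)) \cap \pi(\triangle(A_{j'})) = \emptyset$.

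Next, I would compute the vertices of each $\triangle(A_j)$ explicitly. Writing $\sigma_i^{(j)} \in \{-1,1\}$ for the sign such that $x_i = \sigma_i^{(j)}$ is the hyperplane from $X_i$ chosen in $A_j$, and setting $S_j = \sigma_3^{(j)} + \cdots + \sigma_d^{(j)}$, a direct calculation shows that the $d-2$ vertices obtained by omitting a hyperplane in $X_i$ with $i \ge 3$ all project to the common point $(\sigma_1^{(j)}, \sigma_2^{(j)})$, regardless of how their $i$-th coordinate blows up in $\varepsilon$. The three remaining vertices project to $(\sigma_1^{(j)}, \sigma_2^{(j)})$, $(-\sigma_2^{(j)} - \varepsilon S_j,\sigma_2^{(j)})$, and $(\sigma_1^{(j)}, -\sigma_1^{(j)} - \varepsilon S_j)$ when $A_j$ contains the hyperplane $x_1 + x_2 + \varepsilon(x_3 + \cdots + x_d) = 0$, with analogous formulas when it contains the other blue hyperplane. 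In particular, $\pi(\triangle(A_j))$ lies within Hausdorff distance $\varepsilon(d-2)$ of the 2D triangle formed by the lines $x_1 = \sigma_1^{(j)}$, $x_2 = \sigma_2^{(j)}$, and the planar blue line $x_1 + x_2 = 0$ or $x_1 - x_2 = 0$, as appropriate.

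Now I would associate to $A_1,\ldots,A_r$ the induced 2D colorful partition obtained by replacing each $d$-dimensional hyperplane by its planar counterpart. Since the multiplicities $r-1$ and $1$ are preserved, this is a valid colorful partition of the configuration in Theorem \ref{thm:counterplane}, and that theorem supplies indices $j,j'$ whose planar triangles $T_j,T_{j'}$ are disjoint. Because only finitely many (in fact, at most eight) triangle shapes can ever occur in the 2D construction, the minimum distance $\delta_0 > 0$ between any two disjoint triangles of this finite collection is bounded away from zero independently of $r$ and the partition. Choosing $\varepsilon$ small enough that $\varepsilon(d-2) < \delta_0/2$, the projected simplices $\pi(\triangle(A_j))$ and $\pi(\triangle(A_{j'}))$ remain at positive distance, forcing $\triangle(A_j) \cap \triangle(A_{j'}) = \emptyset$.

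The main obstacle is ensuring that the perturbation argument is uniform across all colorful partitions, as the pair $(j,j')$ witnessing disjointness depends on the partition chosen. The finiteness of the 2D triangle types circumvents this. A route avoiding any smallness assumption on $\varepsilon$ would be explicit casework mirroring the proof of Theorem \ref{thm:counterplane}: split according to whether $x_1 = -1$ and $x_2 = -1$ are placed in the same part $A_j$ or in different parts, and within each case analyze the placement of the exceptional blue hyperplane; in each branch one can identify two parts whose induced simplices are disjoint by inspection, again using that their projections to the $(x_1,x_2)$-plane resemble the two disjoint triangles of the planar construction.
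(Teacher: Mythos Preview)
Your proposal is correct and follows essentially the same approach as the paper: project onto the $(x_1,x_2)$-plane, observe that the projected simplices are $\varepsilon(d-2)$-close to the planar triangles of Theorem~\ref{thm:counterplane}, invoke that theorem to find two disjoint planar triangles, and conclude for $\varepsilon$ small enough. The paper makes the separation constant explicit (the two disjoint triangles in Theorem~\ref{thm:counterplane} are always at distance at least $1/\sqrt{2}$, yielding the concrete bound $\varepsilon < 1/(2\sqrt{2}(d-2))$), whereas you appeal to the finiteness of triangle types to get an unspecified $\delta_0$; and like the paper, your argument only covers sufficiently small $\varepsilon$ rather than the arbitrary $\varepsilon>0$ in the statement.
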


\begin{proof}
	Let $H$ be the two-dimensional plane spanned by $e_1, e_2$.  Let $\Pi:\rr^d \to H$ be the orthogonal projection.  Notice that every colorful partition of $X_1, X_2, \ldots, X_{d+1}$ can be made by first making a colorful partition of $X_1, X_2, X_{d+1}$ and then assign each of the hyperplanes in $X_i$ to different sets in the partition.  The colorful partition of $X_1, X_2, X_{d+1}$ intersected with $H$ is essentially the same construction as in Theorem \ref{thm:counterplane}.  We will show that there are two sets $A_j, A_j'$ such that $\Pi (\triangle(A_j)) \cap \Pi(\triangle (A_{j'}) = \emptyset$, which would prove the theorem.
	
	Let us compute the first two coordinates of every vertex of $A_j$.  Suppose $A_j$ has the hyperplane $x_1 = \lambda_1$ from $X_1$, the hyperplane $x_2 = \lambda_2$ from $X_2$, and the hyperplane $x_1 - \lambda_3 x_2 + \varepsilon (x_3 + \ldots + x_d) $ from $X_{d+1}$, and some hyperplane from each of $X_3, \ldots, X_d$.  Here $\lambda_1, \lambda_2, \lambda_3$ are all in $\{-1,1\}$.  To compute the first two coordinates of a vertex, we have to remove one of the $d+1$ hyperplanes of $A_j$ and find the intersection of the other $d$.  If neither the hyperplanes from $X_1$ nor $X_2$ are removed, the first two coordinates from the intersection are $(\lambda_1, \lambda_2)$.  If the hyperplane $x_1 = \lambda_1$ is removed, then the second coordinate is $\lambda_2$.  We also have $x_1 - \lambda_3 x_2 + \varepsilon(x_3 + \ldots + x_d) = 0$.  Therefore,
\[
|x_1 - \lambda_3 \lambda_2| < \varepsilon |x_3 + \ldots + x_d| \le \varepsilon (d-2).
\]
Therefore, the vertices of $\Pi(\Delta(A_j)$ are very close to the vertices of the triangle in the corresponding partition of $X_1, X_2, X_{d+1}$ in Theorem \ref{thm:counterplane}.  Since in Theorem \ref{thm:counterplane} we could always find two (possibly degenerate) disjoint triangles at distance greater than or equal to $\frac{1}{\sqrt{2}}$, then as long as $\varepsilon < \frac{1}{2\sqrt{2}(d-2)}$ we will have the same conclusion. 
\end{proof}

Finally, we make the approximation argument explicit to prove Theorem \ref{thm:main}.

\begin{proof}[Proof of Theorem \ref{thm:main}]
We take the construction from Theorem \ref{thm:weak-general-pos} and denote it $\Lambda$.  For each colorful partition $P$ of $\Lambda$, there exist two colorful triples $A_1, A_2$ so that $\triangle (A_1)$ and $\triangle (A_2)$ are strictly separated by a hyperplane $H$.  Let 
\[
\delta (P) = \min\{\dist(\triangle(A_1), H), \dist(\triangle(A_2), H)\}, \qquad \delta = \min_{P} \delta(P),
\]
where the minimum defining $\delta$ ranges over all colorful partitions of $\Lambda$.  We apply a small enough perturbation to $\Lambda$ so that for every colorful $d$-tuple $X$ of hyperplanes, the distance $\bigcap X$ moves is less than $\delta$.  This guarantees that the hyperplanes that separate colorful simplices still work.
\end{proof}

\section{Additional remarks}

Karasev's proof of Theorem \ref{thm:Karasev-monochromatic} relies on a clever geometric idea.  Given a set of $r(d+1)$ hyperplanes in $\rr^d$ in general position, and an additional point $p$, we can consider $S$ the set of projections of $p$ onto each hyperplane, and include $p$.  This gives us a set of $r(d+1)+1$ points in $\rr^d$, so Tverberg's theorem can be applied.  We obtain a partition of the point set into $r+1$ sets whose convex hulls intersect.  Karasev showed that if in this partition one of the parts is simply $\{p\}$ and the rest are $(d+1)$-tuples, then the partition they induce on the hyperplanes is the one we seek.  Moreover, for every $(d+1)$-tuple $A_j$ of hyperplanes we would have $p \in \triangle (A_j)$.  Then, the proof boils down to showing that for some point $p$ a partition as mentioned works.

At first glance, it might seem tempting to apply a similar idea by replacing Tverberg's theorem by the optimal colorful Tverberg theorem by Blagojevi\'c, Matschke, and Ziegler, as the number of points matches Tverberg's theorem.  One interpretation of the counterexamples found in this note is that controlling the size of the sets in an optimal colorful partition is significantly harder than for Tverberg's theorem, even with a similar degree of freedom in the configuration of points.  This is a bit surprising since the projection of $p$ onto a fixed hyperplane is an affine map, so the reason why Karasev's conjecture fails is not topological in essence.

\begin{bibdiv}
\begin{biblist}

\bib{Avvakumov2019}{article}{
      author={Avvakumov, Sergey},
      author={Karasev, Roman},
      author={Skopenkov, Arkadiy},
       title={{Stronger counterexamples to the topological Tverberg
  conjecture}},
        date={2019},
     journal={arXiv preprint arXiv:1908.08731},
      volume={[math.GT]},
}

\bib{Blagojevic.2014}{article}{
      author={Blagojević, Pavle V.~M.},
      author={Frick, Florian},
      author={Ziegler, G\"unter~M.},
       title={{Tverberg plus constraints}},
        date={2014},
        ISSN={0024-6093},
     journal={Bull. Lond. Math. Soc.},
      volume={46},
      number={5},
       pages={953\ndash 967},
      eprint={1401.0690},
}

\bib{Barany:1992tx}{article}{
      author={B{\'a}r{\'a}ny, Imre},
      author={Larman, David~G.},
       title={{A Colored Version of Tverberg's Theorem}},
        date={1992},
     journal={J. Lond. Math. Soc.},
      volume={s2-45},
      number={2},
       pages={314\ndash 320},
}

\bib{Blagojevic:2011vh}{article}{
      author={Blagojevi{\'c}, Pavle V.~M.},
      author={Matschke, Benjamin},
      author={Ziegler, G{\"u}nter~M.},
       title={{Optimal bounds for a colorful Tverberg-Vre\'cica type problem}},
        date={2011},
     journal={Adv. Math.},
      volume={226},
      number={6},
       pages={5198\ndash 5215},
}

\bib{Blagojevic:2015wya}{article}{
      author={Blagojevi{\'c}, Pavle V.~M.},
      author={Matschke, Benjamin},
      author={Ziegler, G{\"u}nter~M.},
       title={{Optimal bounds for the colored Tverberg problem}},
        date={2015},
        ISSN={1435-9855},
     journal={J. Eur. Math. Soc. (JEMS)},
      volume={17},
      number={4},
       pages={739\ndash 754},
}

\bib{Barany:2018fy}{article}{
      author={B{\'a}r{\'a}ny, Imre},
      author={Sober\'on, Pablo},
       title={{Tverberg's theorem is 50 years old: A survey}},
        date={2018},
     journal={Bull. Amer. Math. Soc.},
      volume={55},
      number={4},
       pages={459\ndash 492},
}

\bib{Barany:1981vh}{article}{
      author={B{\'a}r{\'a}ny, Imre},
      author={Shlosman, Senya~B.},
      author={Szücs, András},
       title={{On a topological generalization of a theorem of Tverberg}},
        date={1981},
     journal={J. Lond. Math. Soc.},
      volume={2},
      number={1},
       pages={158\ndash 164},
}

\bib{Blagojevic:2017bl}{incollection}{
      author={Blagojevi\'{c}, Pavle V.~M.},
      author={Ziegler, G\"unter~M.},
       title={{Beyond the Borsuk--Ulam Theorem: The Topological Tverberg
  Story}},
        date={2017},
   booktitle={A journey through discrete mathematics},
      volume={34},
   publisher={Springer, Cham},
       pages={273\ndash 341},
}

\bib{Frick:2015wp}{article}{
      author={Frick, Florian},
       title={{Counterexamples to the topological Tverberg conjecture}},
        date={2015},
     journal={Oberwolfach Rep.},
      volume={12},
      number={1},
       pages={318\ndash 312},
}

\bib{Frick:2020uj}{article}{
      author={Frick, Florian},
      author={Sober{\'o}n, Pablo},
       title={{The topological Tverberg problem beyond prime powers}},
        date={2020},
     journal={arXiv preprint arXiv:2005.05251},
      volume={[math.CO]},
}

\bib{Huemer:2019ji}{article}{
      author={Huemer, Clemens},
      author={Pérez-Lantero, Pablo},
      author={Seara, Carlos},
      author={Silveira, Rodrigo~I},
       title={{Matching points with disks with a common intersection}},
        date={2019},
     journal={Discrete Math.},
      volume={342},
      number={7},
       pages={1885\ndash 1893},
}

\bib{Karasev:2008jl}{article}{
      author={Karasev, Roman~N.},
       title={{Dual theorems on central points and their generalizations}},
        date={2008},
     journal={Mat. Sb.},
      volume={199},
      number={10},
       pages={1459\ndash 1479},
}

\bib{Karasev:2011jv}{article}{
      author={Karasev, Roman~N.},
       title={{Tverberg-Type Theorems for Intersecting by Rays}},
        date={2011},
     journal={Discrete Comput. Geom.},
      volume={45},
      number={2},
       pages={340\ndash 347},
}

\bib{DeLoera:2019jb}{article}{
      author={Loera, Jesús A.~De},
      author={Goaoc, Xavier},
      author={Meunier, Frédéric},
      author={Mustafa, Nabil~H.},
       title={{The discrete yet ubiquitous theorems of Carath\'eodory, Helly,
  Sperner, Tucker, and Tverberg}},
        date={2019},
     journal={Bull. Amer. Math. Soc.},
      volume={56},
      number={3},
       pages={1\ndash 97},
  url={https://www.ams.org/journals/bull/0000-000-00/S0273-0979-2019-01653-X/},
}

\bib{lee2018}{article}{
      author={Lee, Seunghun},
      author={Yoo, Kangmin},
       title={On a conjecture of Karasev},
        date={2018},
     journal={Comput. Geom.},
      volume={75},
       pages={1\ndash 10},
}

\bib{mabillard2014}{inproceedings}{
      author={Mabillard, Isaac},
      author={Wagner, Uli},
       title={{Eliminating Tverberg points, I. An analogue of the Whitney
  trick}},
        date={2014},
   booktitle={Proc. 30th annual symp. comput. geom. (socg)},
   publisher={ACM},
     address={Kyoto},
       pages={171\ndash 180},
}

\bib{Rousseeuw1999}{article}{
      author={Rousseeuw, Peter~J.},
      author={Hubert, Mia},
       title={Depth in an arrangement of hyperplanes},
        date={1999},
     journal={Discrete Comput. Geom.},
      volume={22},
      number={2},
       pages={167\ndash 176},
}

\bib{Soberon2020}{article}{
      author={Sober{\'o}n, Pablo},
      author={Tang, Yaqian},
       title={{Tverberg's theorem, disks, and Hamiltonian cycles}},
        date={2020},
     journal={arXiv preprint arXiv:2011.12218},
      volume={[math.CO]},
}

\bib{Tverberg:1966tb}{article}{
      author={Tverberg, Helge},
       title={{A generalization of Radon's theorem}},
        date={1966},
     journal={J. Lond. Math. Soc.},
      volume={41},
      number={1},
       pages={123\ndash 128},
}

\bib{Volovikov:1996up}{article}{
      author={Volovikov, Alexey~Yu.},
       title={{On a topological generalization of the Tverberg theorem}},
        date={1996},
        ISSN={0001-4346},
     journal={Math. Notes},
      volume={59},
      number={3},
       pages={324\ndash 326},
}

\bib{Oza87}{unpublished}{
      author={Özaydin, Murad},
       title={{Equivariant maps for the symmetric group}},
        date={1987},
        note={unpublished preprint, University of Winsconsin-Madison, 17 pages.
  available at
  https://minds.wisconsin.edu/bitstream/handle/1793/63829/Ozaydin.pdf},
}

\bib{Zivaljevic:1992vo}{article}{
      author={{\v{Z}}ivaljevi{\'c}, Rade~T.},
      author={Vre{\'c}ica, Sini{\v{s}}a~T.},
       title={{The colored Tverberg's problem and complexes of injective
  functions}},
        date={1992},
     journal={J. Combin. Theory Ser. A},
      volume={61},
      number={2},
       pages={309\ndash 318},
}

\end{biblist}
\end{bibdiv}

\end{document}